\documentclass[11pt,oneside, reqno]{amsart} 
\usepackage[foot]{amsaddr}

\usepackage[utf8x]{inputenc} 
\usepackage{tcolorbox}

\usepackage{geometry} 
\usepackage{graphicx} 
\usepackage{color}
\usepackage{stmaryrd}

\usepackage{array} 
\usepackage{paralist} 
\usepackage{verbatim} 
\usepackage{subfig} 

\usepackage{amsfonts}
\usepackage{dsfont}
\usepackage{amsmath}
\usepackage{amsthm}
\usepackage{amssymb}
\usepackage{mathtools}
\mathtoolsset{centercolon,showonlyrefs,showmanualtags}

\usepackage{amsthm}
\theoremstyle{definition}

\newtheorem{definition}{Definition}
\newtheorem{lemma}{Lemma}
\newtheorem{remark}{Remark}
\newtheorem{example}{Example}
\newtheorem{example*}{Example\textsuperscript{*}}
\newtheorem{proposition*}{Proposition\textsuperscript{*}}
\newtheorem{corollary}{Corollary}
\newtheorem{corollary*}{Corollary\textsuperscript{*}}
\newtheorem{theorem}{Theorem}
\newtheorem{proposition}{Proposition}

\newcommand{\Rmax}{R_{\mathrm{max}}}
\newcommand{\Rmin}{R_{\mathrm{min}}}

\newcommand{\We}{\mathrm{We}}

\def\Limes#1#2 {\lim\limits_{#1\rightarrow #2}}

\def\eps{\epsilon}

\def\R{\mathbb{R}}

\def\Xint#1{\mathchoice
{\XXint\displaystyle\textstyle{#1}}%
{\XXint\textstyle\scriptstyle{#1}}%
{\XXint\scriptstyle\scriptscriptstyle{#1}}%
{\XXint\scriptscriptstyle\scriptscriptstyle{#1}}%
\!\int}
\def\XXint#1#2#3{{\setbox0=\hbox{$#1{#2#3}{\int}$ }
\vcenter{\hbox{$#2#3$ }}\kern-.59\wd0}}
\def\avint{\Xint-}

\newcommand{\grad}{\nabla}

\renewcommand{\div}{\operatorname{div}}

\newcommand{\dd}{\, \mathrm{d}}
\newcommand{\red}[1]{{\textcolor{red}{#1}}}

\renewcommand{\div}{\grad\cdot}
\renewcommand{\epsilon}{\varepsilon}

\renewcommand{\&}{and}

\usepackage[square]{natbib}
\setcitestyle{numbers}

\usepackage{verbatim}
\usepackage{hyperref}

\usepackage{xcolor}

\newcommand{\yuanjiang}[1]{\textcolor{red}{#1}}

\begin{document}

\title{Non-Existence of thick bubble rings at low Weber numbers}
\thanks{This work is funded by the Deutsche Forschungsgemeinschaft (DFG, German Research Foundation) under Germany's Excellence Strategy EXC 2044/2 - 390685587, Mathematics M\"unster: Dynamics-Geometry-Structure, and  grant  531098047.}

\author{Yuanjiang Han and Christian Seis}

\address{Institut f\"ur Analysis und Numerik,  Universit\"at M\"unster, Orl\'eans-Ring 10, 48149 M\"unster, Germany.}
\email{yhan@uni-muenster.de, seis@uni-muenster.de}

\begin{abstract}
We examine the existence of thick bubble rings within the framework of the free-boundary capillary Euler equations, focusing on the regime of low Weber numbers. Although spheroidal bubbles are known to approach a spherical shape in this limit, the possibility of thick bubble rings persisting at low Weber numbers has remained uncertain. In contrast to the ordinary Euler equations, which admit thick vortex ring solutions, our analysis reveals that the free-boundary capillary Euler equations do not support thick bubble rings at low Weber numbers. This distinction highlights the significant impact of surface tension on the behavior of vortex rings in the capillary regime.
\end{abstract}

\maketitle

\section{Introduction}
A vortex ring in an ideal fluid is a three-dimensional toroidal region in which the vorticity is concentrated. Typical examples include smoke rings, such as those deliberately exhaled by tobacco smokers or those that appear naturally during volcanic eruptions. More recently, underwater vortex rings generated by cephalopods through jet propulsion have attracted interest from engineers and zoologists; see, for example, \cite{LuoHouXuCaoPan23} and the references therein.

Mathematically, classical vortex rings arise as travelling-wave solutions of the axisymmetric Euler equations. These are steady in the moving frame. The first example, and the only one that is completely explicit, was given by Hill in 1894 \cite{Hill1894}; despite being called a "ring", Hill's solution has, in fact, a perfectly spherical shape, with only the inner stream surfaces being toroidal. A major breakthrough came in 1969, when Fraenkel constructed  thin vortex rings whose minor (i.e., core) radii are much smaller than their major radii (i.e., the distances of the vortex core to the axis) \cite{Fraenkel70}. Thick vortex rings, whose major radii are comparable in size to the core radii, where derived slightly later by  Norbury \cite{Norbury72}. It is interesting to note that  for a given vortex intensity or circulation, Hill's spherical vortex is the only possible simply-connected vortex \cite{AmickFraenkel86}, and any thick or thin ring must coincide with Norbury's or Fraenkel's solutions, respectively \cite{AmickFraenkel88,CaoLaiQinZhanZou22,CaoQinYuZhanZou22+}. A wealth of further constructions can be found in the literature, for example in \cite{FraenkelBerger74,BergerFraenkel80,FriedmanTurkington81,AmbrosettiStruwe89,Yang95,BadianiBurton01,Burton03,CaoQinYuZhanZou22+}. Non-steady vortex rings were analysed, for instance, in \cite{BenedettoCagliotiMarchioro00,JerrardSeis17}.

In the present paper, we focus on hollow vortex rings, where the fluid circulates around a toroidal cavity. Examples of such vortex rings include air bubble rings formed underwater by cetaceans or scuba divers. To accurately model these bubble  rings, we assume that the pressure on the rings' surfaces is determined by surface tension via the Young--Laplace law. The simplest mathematical model for bubble rings is the free-boundary capillary Euler equation. This model involves a single physical non-dimensional constant, the Weber number $\We$, which quantifies the ratio of inertial forces to surface tension forces, see equation \eqref{2} below. The Weber number will play a crucial role in our analysis.

To the best of our knowledge, there are currently only two known families of travelling-wave vortex solutions to the free-boundary capillary Euler equations. These comprise the thin toroidal bubble rings constructed by Meyer and the second author \cite{MeyerSeis26}, and the spheroidal bubbles studied by Meyer, Niebel, and the second author \cite{MeyerNiebelSeis25}. Notably, the spheroidal bubbles approximate a spherical shape in the limit of low Weber numbers where surface tension effects dominate, which aligns with basic intuition. Additionally, we mention the rotating capillary vortex sheet solutions found by Murgante, Roulley, and Scrobogna as solutions to the planar Euler equations, which are travelling waves in the azimuthal direction \cite{MurganteRoulleyScrobogna24}.

In view of the examples in \cite{MeyerNiebelSeis25}, it is worth noting that the free-boundary capillary Euler equations exhibit greater flexibility than the ordinary Euler equations with regard to spheroidal solutions, as the former allow for a wider range of steady configurations beyond Hill's spherical vortex, which is the only possible steady, simply-connected vortex configuration in the ordinary Euler equations.

In the present paper, we aim to show that the situation is reversed for toroidal solutions: Indeed, our goal is to show that thick bubble ring solutions cannot exist if the Weber number is sufficiently low, that is, if surface tension effects dominate. This observation is in stark contrast to the ordinary Euler equations, where thick rings are known to exist thanks to Norbury's work, and thus, for the capillary free-boundary Euler equations, a certain rigidity emerges. This interplay between flexibility and rigidity, rooted in the topological properties of the vortex configurations, makes the study of free-boundary capillary Euler equations a rich and complex problem.

Recently, Niebel established a rigidity result for a related problem for the planar capillary free-boundary Euler equation: for sufficiently low Weber numbers, there are no stationary bubble configurations apart from the spherical one \cite{Niebel25}. Niebel's work extends a seminal rigidity result by Serrin \cite{Serrin71} to the capillary setting.

The question if thick bubble ring solutions exist for sufficiently high Weber numbers remains an interesting open problem that we plan to address in a future work.

\section{Mathematical setting and results}
Steady vortex rings are travelling wave solutions, which move at a constant speed \(W\) in a fixed direction, say \(e_z\), without changing their shape. Hence, if $u=u(t,x)\in\R^3$ is the velocity of the ambient fluid, we may write
\[
u(t,x)=U(x-tWe_z).
\]
Since in axisymmetric coordinates,
\[
x=r e_r +z e_z,\quad U=U_r(r,z)e_r+U_z(r,z)e_z, 
\]
the vector field \(rU\) is divergence free, there exists a stream function $\psi$ satisfying \(rU= \grad^{\perp}\psi.\) Here, $\grad=(\partial_r,\partial_z)^T$ is the cylindrical gradient. The co-moving velocity field \(U-We_z\) is tangential to the ring's surface and hence  the modified stream function \(\Psi= \psi-Wr^2/2\) should be constant on the boundary of the ring.

In axisymmetric coordinates, the bubble ring is described by its cross-section $E$ in the meridional half plane $\R^2_+=\{(r,z)\in\R_+\times \R\}$. Its boundary is denoted by \(\partial E\), and we will tacitly assume that it is regular, say $C^2$. 

Considering the interior of a bubble ring as vacuum for simplicity, the stream function $\psi$ satisfies the overdetermined boundary problem
\begin{align}
    -\div\left(\frac1r\grad\psi\right)&=0\quad\mbox{in }\R^2_+\setminus \overline E,\label{4}\\
    \psi -\frac{W}2r^2 &=\gamma\quad\mbox{on }\partial E,\label{3}\\
    2\sigma H - \rho\left(\frac1r\partial_n \psi -W n\cdot e_r\right)^2 + \lambda&=0\quad \mbox{on }\partial E,\label{5}\\
    \psi &=0\quad \mbox{on }\partial\R^2_+,\label{6}\\
   \psi&\to0\quad\mbox{as }|(r,z)|\to\infty.\label{7}
\end{align}
Equation \eqref{4} simply states that the fluid is irrotational outside of the ring, and thus, the vorticity is concentrated on its surface $\partial E$. The constant $\gamma$ in \eqref{3} is called the flux constant. The quantity $H$ in \eqref{5} is  (twice) the mean curvature of the ring. In axisymmetric coordinates, it can be expressed as 
 \begin{equation}
     \label{20}
     H=\kappa+\frac{n\cdot e_r}{r},
 \end{equation}
 where \(\kappa\) is the curvature of the one-dimensional boundary curve \(\partial E\), designed to be positive on convex sets,  and \(n\) is the unit normal vector pointing into the exterior domain  \(\R^2_+\setminus\overline{E}\). The constant $\sigma$ is the surface tension coefficient and $\rho$ is the mass density of the fluid. Equation \eqref{5} has its origin in the Young--Laplace law $p=\sigma H$, in which $p$ is the pressure. It can be brought into the present form by expressing the pressure in terms of the kinetic energy density via the Bernoulli equations. We call $\lambda$ accordingly the Bernoulli constant. A detailed derivation of this model can be found, for instance, in Section 1 of \cite{MeyerSeis26}.
 
The problem \eqref{4}, \eqref{3}, \eqref{5}, \eqref{6}, and \eqref{7}  is called \emph{overdetermined} because the solution $\psi$ satisfies both a Dirichlet boundary condition \eqref{3} and a Neumann-type boundary condition \eqref{5}. It is evident that for a given domain $E$, the elliptic problem \eqref{4} cannot be solved with both boundary conditions simultaneously. Instead, in the above problem, besides $\psi$, also the domain $E$ is an unknown of the problem, and so are the constants $\gamma$, $W$ and $\lambda$. Again, we refer to the works \cite{MeyerNiebelSeis25} and \cite{MeyerSeis26} for two examples, in which this overdetermined problem is solved in the case of spheroidal bubbles and thin bubble rings, respectively. Overdetemined boundary problems are classical topics in the theory of partial differential equations and geometric analysis; a nice overview is provided in ~\cite{DominguezVazquezEncisoPeraltaSalas23}.

The strength of the bubble ring is characterised by the circulation,
\begin{equation}
\label{10}
\beta = -\int_{\partial E} \frac1r\partial_n \psi\dd s.
\end{equation}
According to Kelvin's circulation theorem, this quantity is a constant of motion also in the non-steady setting, because the loop $\partial E$ moves with the fluid flow. We shall treat the circulation as a control parameter and prescribe it in the following. In particular, we deduce from \eqref{3} that the speed $W$ of the ring will respond linearly to changes in $\beta$, which is consistent with the Kelvin--Hicks formula for the speed of a thin ring (see, e.g.~(1.19) in \cite{MeyerSeis26}).

By non-dimensionalising equations \eqref{4}, \eqref{3}, \eqref{5}, \eqref{6}, \eqref{7}, and \eqref{10} (see Section \ref{S3}), we find that the problem depends on a few non-dimensional parameters, including the circulation-based \emph{Weber number}
\begin{equation} \label{2}
\We=\frac{\sqrt{2\pi} \rho \beta^2}{\sigma \sqrt{|E|}},
\end{equation}
which quantifies the relative importance of the fluid's inertia, as characterised by the circulation, compared to its surface tension. Here, $|E|$ represents the area of the cross-section. The prefactor $\sqrt{2\pi}$ in the numerator is introduced for convenience as it simplifies our later presentation.

The main goal of this study is to demonstrate the non-existence of thick bubble rings in the context of low Weber numbers, where surface tension effects are dominant. To this end, we assume that solutions to \eqref{4}, \eqref{3}, \eqref{5}, \eqref{6}, \eqref{7}, and \eqref{10} describing thick bubble rings exist, and we focus on deriving bounds on the Weber number for such solutions.

We focus on configurations for which the cross-section $E$ is \emph{convex} and \emph{symmetric} with respect to the $z$ axis. Both assumptions are typical for vortex rings.  Furthermore, we suppose that the stream function in the moving frame $\Psi=\psi-W r^2/2$, which is constant on the ring's surface \eqref{3}, is maximal on the surface, so that
\begin{equation}
    \label{47}
    \partial_n \Psi \le 0\quad\mbox{on }\partial E.
\end{equation}
This assumption ensures that the fluid circulates in a fixed, namely clockwise, direction around the ring.

In the literature, a bubble ring is called \emph{thick} if its cross-sectional radius $a$ is comparable in size to its major radius $R$, defined as
\begin{equation}
\label{9a}
R:=\frac{1}{|E|}\int_{E}r \dd (r,z).
\end{equation}
In contrast, bubble rings satisfying $a\ll R$ are called \emph{thin}. Independently from the precise definition of $a$, for theoretical purposes, determining a precise value for a threshold for the ratio $a/R$ up to which a ring may be called thin or from which on it may be called thick seems arbitrary. Instead, we propose a more geometric condition based on the ring's total mean-curvature.

\begin{definition}
\label{D1}
Let $E\subset \R^2_+$ be a convex symmetric set. Then the axisymmetric torus with cross-section $E$ is called a \emph{thick ring} if it has non-positive total mean-curvature, 
\begin{equation}
 \label{1}
\int_{\partial E} H\dd s\le 0.
\end{equation}
\end{definition}

The approximate geometry of a typical thick ring is plotted in Figure \ref{fig1}.\begin{figure}[t]
 \centering \includegraphics[scale=.5]{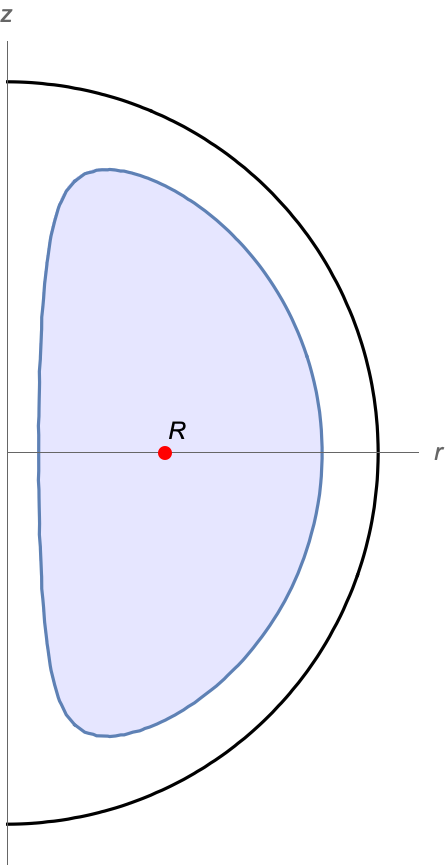}
	\caption{\label{fig1} A thick vortex inside of the a semi-circle of the same geometric center $(R,0)$.}
\end{figure}

In Lemma \ref{L1b} below, we see that the non-positivity condition \eqref{1} for the mean-curvature is equivalent to requiring that the vortex ring is placed sufficiently close to the symmetry axis, so that
\begin{equation}
    \label{21}
    \delta: = \int_{E}\frac{1}{r^2}\dd (r,z)-2\pi\ge0.
\end{equation}
For instance, for the thick vortex ring solutions to the ordinary Euler equations constructed by Norbury \cite{Norbury72}, which  approximate the toroidal stream surfaces of Hill’s spherical ring nearest the symmetry axis, so that $\mathrm{dist}(E,\partial \R^2_+)/R\ll1$, this thickness measure is very large, $\delta \sim R/\mathrm{dist}(E,\partial \R^2_+)\gg1$.

 Our main result establishes a lower bound on the Weber number if the bubble ring is thick, as defined in Definition \ref{D1}.

\begin{theorem}\label{T1} 
There exists a positive constant $C$ such that for any solution $(E, \psi, W, \gamma, \lambda)$ to the equations \eqref{4}, \eqref{3}, \eqref{5}, \eqref{6}, \eqref{7} that satisfies the curvature condition \eqref{1} and the maximum principle \eqref{47}, it follows that
\[
\We \ge \frac{C}{\mu+\mu^3}\left(\frac1{\mu^2}+\delta\right),
\]
where $\mu = R/\sqrt{|E|}$.
\end{theorem}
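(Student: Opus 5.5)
The plan is to integrate the dynamic boundary condition \eqref{5} over the torus surface against two different weights, and then eliminate the unknown Bernoulli constant $\lambda$ between the two resulting identities. I will work in three dimensions. Let $\Omega$ be the solid torus with cross-section $E$, let $N$ be its outer normal, and let $V=U-We_z$ be the co-moving velocity. Then $V$ is curl-free and divergence-free outside $\overline\Omega$, it is tangential on $\partial\Omega$, its tangential speed there is $|V|=|v|$ with $v:=\frac1r\partial_n\Psi$, and $V\to -We_z$ at infinity.

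The first identity is the plain integral of \eqref{5} over $\partial E$ with the surface measure $r\dd s$. Alongside it I use the two-dimensional version with measure $\dd s$. For that version, Lemma~\ref{L1b} gives
\[
\int_{\partial E}H\dd s=2\pi-\int_E r^{-2}\dd(r,z)=-\delta,
\]
since $\int_{\partial E}\frac{n\cdot e_r}{r}\dd s=-\int_E r^{-2}$ and $\int_{\partial E}\kappa\dd s=2\pi$. The resulting identity is
\[
\lambda|\partial E|=\rho\int_{\partial E}v^2\dd s+2\sigma\delta .
\]
The right-hand side is the good term: it contains $\delta\ge0$, and by Cauchy--Schwarz together with $\int_{\partial E}v\dd s=-\beta$ (using $\int_{\partial E}n\cdot e_r\dd s=0$) it is also bounded below by $\rho\beta^2/|\partial E|$.

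The second identity comes from multiplying \eqref{5} by the support function $X\cdot N$ and integrating over $\partial\Omega$. Here $X$ is the position vector in $\R^3$, or equivalently $(r,z)\cdot n$ with weight $r\dd s$. I then use two tools.
\begin{itemize}
\item The Minkowski formula $\int_{\partial\Omega}H\,X\cdot N\dd A=2|\partial\Omega|$ turns the curvature term into a positive perimeter term.
\item The divergence theorem gives $\int_{\partial\Omega}X\cdot N\dd A=3|\Omega|$ for the $\lambda$ term.
\end{itemize}
For the kinetic term $\int|V|^2X\cdot N\dd A$, I apply a Rellich--Pohozaev identity to the exterior potential flow. The identity is the divergence of $|V|^2X-2(V\cdot X)V$, which is $|V|^2$ for a harmonic field in $\R^3$. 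Since $V\cdot N=0$ on $\partial\Omega$, this expresses $\int_{\partial\Omega}|V|^2X\cdot N$ through the exterior kinetic energy $\int(|V|^2-W^2)$ and far-field terms involving $W^2|\Omega|$ and the impulse. Subtracting a suitable multiple of the first identity eliminates $\lambda$. What remains is an inequality of the form
\[
\sigma\Big(\delta+\frac{|\partial E|}{R}\,\text{(geometric term)}\Big)\lesssim \rho\Big(\int_{\partial E}v^2\dd s+W^2\,\cdot(\text{geometry})+\text{energy}\Big).
\]
The $\mu^{-2}$ contribution in the theorem should come from the isoperimetric/perimeter term $|\partial\Omega|/|\Omega|\gtrsim 1/\sqrt{|E|}$, once it is compared with the scale $R$.

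The final step is to bound the right-hand side by $\rho\beta^2$ times powers of $\mu$, and then non-dimensionalise with \eqref{2}. For $W$ I use \eqref{3} and \eqref{6}--\eqref{7}: $\Psi$ is harmonic for the weighted operator outside $E$, and the Dirichlet energy $\int r^{-1}|\grad\psi|^2=-\gamma\beta+W(\dots)$ ties $W$ and $\gamma$ linearly to $\beta$, with constants depending on $R$ and $|E|$ (that is, on $\mu$). For $\int v^2$ I use the sign condition \eqref{47}, which makes $v$ of one sign, together with convexity and symmetry of $E$: these allow a comparison of $\partial_n\Psi$ with explicit barriers for the operator $\div(r^{-1}\grad\,\cdot)$, giving $\|v\|_\infty\lesssim \beta/|\partial E|$ up to $\mu$-dependent factors.

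I expect this pointwise, or at least $L^2$, control of the boundary velocity $v$ in terms of $\beta$ to be the main obstacle, since it is where the factor $(\mu+\mu^3)^{-1}$ is produced. I would first try the barrier argument built from the Stokes stream function of a point ring located at $(R,0)$.
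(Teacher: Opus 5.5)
\textbf{There is a genuine gap, and it sits at the step you yourself flag as the main obstacle.}

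Your first identity is correct and is essentially the paper's Lemma~\ref{L2}. Eliminating $\lambda$ through the Minkowski formula is also legitimate. After substitution, $\int_{\partial E}v^2\dd s$ even enters with a favourable sign, leaving
\[
\sigma\Bigl(4|\partial\Omega|+\frac{6|\Omega|\,\delta}{|\partial E|}\Bigr)\le\rho\int_{\partial\Omega}|V|^2\,X\cdot N\dd A .
\]
The gap is what comes next. You need an \emph{upper} bound on the inertial side (boundary speed, exterior energy, $W$) by $\rho\beta^2$ times a function of $\mu$ alone, and this is only sketched.

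The sketch cannot work as stated. The information you plan to feed into barriers is the exterior problem, \eqref{47}, convexity, symmetry, $C^2$ regularity, $\delta\ge0$ and fixed $\mu$. None of this controls the curvature of $\partial E$. Smooth the corners of a convex symmetric triangle at scale $\eps$. The exterior boundary speed then grows like $\eps^{\alpha-1}$, with $\alpha=\pi/(2\pi-\theta)<1$ for interior angle $\theta$ (for instance whenever $\gamma,W>0$, by boundary Harnack). Meanwhile $\beta$ and $|\partial E|$ stay bounded. So an estimate $\|v\|_\infty\lesssim\beta/|\partial E|$ with $\mu$-dependent constants is unattainable, and the $L^2$ or energy version is left entirely open.

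Likewise, for fixed $E$ the exterior problem ties only one linear combination of $W$ and $\gamma$ to $\beta$. The energy identity is not a second constraint. When $\gamma$ and $W$ have opposite signs, bounding $W/\beta$ would need quantitative information you do not have.

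\textbf{The missing idea} is that \eqref{47} lets you use the circulation \emph{linearly}, so no velocity estimate is needed at all. Since $v=\frac1r\partial_n\Psi\le0$, condition \eqref{5} gives $|v|=\sqrt{(2\sigma H+\lambda)/\rho}$ pointwise on $\partial E$. Integrating and using $\int_{\partial E}n\cdot e_r\dd s=0$ yields the exact identity
\[
\sqrt{\rho}\,\beta=\int_{\partial E}\sqrt{2\sigma H+\lambda}\dd s .
\]
In the paper's units this reads $\sqrt{\We}=\int_{\partial E}\sqrt{2H+\lambda}\dd s$, and the problem becomes a purely geometric lower bound:
\begin{itemize}
\item Your first identity gives $\lambda\ge0$, and convexity gives $\kappa\ge0$. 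Hence $2H+\lambda\ge2n\cdot e_r/r$ and $2H+\lambda\ge\lambda$.
\item On $\mathcal S(b)=\{n\cdot e_r>b\}$ one has $n\cdot e_r/r\ge b/\Rmax\ge b/(3R)$ by Lemma~\ref{L3}.
\item Writing the outer part of $\partial E$ as a graph over $z$ gives $|\mathcal S(b)|\gtrsim1/R$ for $b\sim\min\{1,R^{-2}\}$ (with $a=1$). This produces the $\mu^{-2}(\mu+\mu^3)^{-1}$ term.
\item The $\delta$ term comes from $\int_{\mathcal S(0)}\sqrt\lambda\dd s\ge2h\sqrt\lambda$ together with $\lambda|\partial E|\ge\delta$. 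Here $h$ and $|\partial E|$ are controlled by the area and by $\Rmax\le3R$.
\end{itemize}
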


For any bubble ring whose cross-section $E$ is non-eccentric, the length scale $\sqrt{|E|}$ can be considered an approximation for the inner radius $a$ of the ring, and thus, for a typical thick bubble ring, we have $\mu \sim 1$. Figure \ref{fig1} illustrates this relationship: it shows a thick bubble ring inside a semi-circle of radius $3\pi R/4$, both having the same geometric centre $(R,0)$ and a comparable area. Evidently, if $\mu\sim 1$, the statement of the theorem becomes
\[
\We \ge C(1 + \delta).
\]
In particular, for any thick bubble ring in shape similar to Norbury’s vortex rings, we would necessarily have
\[
\We \ge \frac{CR}{\mathrm{dist}(E,\partial \R^2_+)}.
\]
This observation shows that Norbury’s perturbative construction cannot be carried out for bubble rings at a fixed Weber number.

We note the following consequence:

\begin{corollary}\label{C1}
There exists a positive constant $C$ such that for any solution $(E, \psi, W, \gamma, \lambda)$ to the equations \eqref{4}, \eqref{3}, \eqref{5}, \eqref{6}, and \eqref{7} that satisfies the maximum principle \eqref{47} and the bound
\begin{equation}
    \label{18}
2\pi R^2 \le |E| ,
\end{equation}
it follows that
\begin{equation}
\label{11}
\We \geq C(1+\delta).
\end{equation}
\end{corollary}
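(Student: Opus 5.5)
The plan is to deduce the corollary directly from Theorem \ref{T1}. The only hypothesis of Theorem \ref{T1} that is not assumed in Corollary \ref{C1} is the curvature condition \eqref{1}. Lemma \ref{L1b} states that \eqref{1} is equivalent to $\delta\ge0$ in \eqref{21}. So the first step is to show that the area bound \eqref{18} forces $\delta\ge 0$. The second step is to use \eqref{18} again to control $\mu$ in the bound of Theorem \ref{T1}. Throughout, $E$ is taken convex and symmetric, as in the standing assumptions of this section.

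For the first step I would use Jensen's inequality for the convex function $r\mapsto r^{-2}$ on $(0,\infty)$, with respect to the normalized measure $|E|^{-1}\dd(r,z)$ on $E$. By the definition \eqref{9a} of $R$ as the mean of $r$ over $E$, this gives
\[
\frac{1}{|E|}\int_E \frac{1}{r^2}\dd(r,z) \;\ge\; \left(\frac{1}{|E|}\int_E r\dd(r,z)\right)^{-2} = \frac{1}{R^2}.
\]
Hence $\int_E r^{-2}\dd(r,z)\ge |E|/R^2\ge 2\pi$ by \eqref{18}, that is, $\delta\ge0$. Lemma \ref{L1b} then yields \eqref{1}, and Theorem \ref{T1} applies.

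For the second step, \eqref{18} says exactly that $\mu^2=R^2/|E|\le 1/(2\pi)$. It follows that $\mu+\mu^3\le (2\pi)^{-1/2}+(2\pi)^{-3/2}$ is bounded by an absolute constant, and that $1/\mu^2\ge 2\pi\ge 1$. Inserting these two facts into the bound of Theorem \ref{T1} gives
\[
\We\ \ge\ \frac{C}{(2\pi)^{-1/2}+(2\pi)^{-3/2}}\,(1+\delta),
\]
which is \eqref{11} after renaming the constant.

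I expect no real obstacle here. The only point needing care is that Jensen's inequality is applied with respect to the normalized area measure on $E$, which is the same measure used in the definition \eqref{9a} of $R$, so that $R$ is precisely the Jensen mean.
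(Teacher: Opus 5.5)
Your proposal is correct and follows the paper's route. The paper's Lemma \ref{L1} obtains the same inequality $|E|/R^2\le\int_E r^{-2}\dd(r,z)$ by applying Cauchy--Schwarz twice, where you use Jensen once, and then combines Lemma \ref{L1b} with Theorem \ref{T1}; your explicit use of $\mu^2\le 1/(2\pi)$ simply writes out the step the paper calls obvious.
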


 Actually, the proof of this corollary relies on the observation that any ring for which \eqref{18} holds is thick in the sense of Definition \ref{D1}, see Lemma \ref{L1}. The reverse statement is wrong, as illustrated in the following example.

 \begin{example}\label{E1}
Consider a symmetric set $E$ that is enclosed by an ellipse,
\[
E = \left\{ (r,z)\in\R^2_+:\: \frac{(r-R)^2}{m^2} +\frac{z^2}{n^2}\le 1\right\}.
\]
Using the polar coordinates $r=R+mt\cos\theta$ and $z=nt\cos \theta$ with $t\in[0,1)$ and $\theta\in[-\pi,\pi)$, we can write the curvature integral as
\[
\int_E \frac1{r^2}\dd (r,z) = mn \int_0^1 \int_{-\pi}^{\pi}\frac{t}{(R +mt\cos \theta)^2}\dd\theta\dd t.
\]
A straight forward computation reveals that
\[
\int_{-\pi}^{\pi} \frac1{R +mt\cos\theta}\dd \theta = \frac{2\pi}{\sqrt{R^2_c -m^2 t^2}},
\]
and thus
\[
\int_{-\pi}^{\pi} \frac1{(R+mt\cos\theta)^2}\dd\theta = -\frac{\dd}{\dd R} \int_{-\pi}^{\pi} \frac1{R+mt\cos\theta}\dd \theta = \frac{2\pi R}{(R^2-m^2 t^2)^{3/2}}.
\]
We deduce that
\[
\int_{E}\frac1{r^2}\dd (r,z) =  \int_0^1 \frac{2\pi mnR t}{(R^2-m^2 t^2)^{3/2}}\dd t=\frac{2\pi n}{m}\left(\frac{R}{\sqrt{R^2-m^2}}-1\right),
\]
and thus, the condition \eqref{21} is equivalent to
\begin{equation}
    \label{30}
\frac{m}n+1 \le \frac{R}{\sqrt{R^2-m^2}}.
\end{equation}

We suppose now that the left end point of the ellipse is given by $(\eps,0)$, so that $R = m+\eps$. Rewriting \eqref{30} then gives
\[
\frac{m}n +1  \le \frac{m+\eps}{\sqrt{\eps}\sqrt{2m+\eps}}.
\]
Hence, for any given axis ratio $m/n$, the condition \eqref{21} can be satisfied if the ellipse is moved sufficiently close to the $z$ axis, $\eps\ll1$.

 On the  other hand, using the well-known fact that the area of the ellipse is  $|E|=\pi mn$, we may rewrite \eqref{18} as
\[
2\left(\frac{m+\eps}{n}\right)^2 \le \frac{m}{n}.
\]
Hence, \eqref{18} is violated if the aspect ratio $m/n$ is large.
\end{example}

The proof of our main results   will be the content of the next section.

\section{Proofs} \label{S3}
 
 For notional simplicity, we introduce the approximate minor  radius of the toroidal bubble as
 \begin{equation}
     \label{23}
     a:=\left(\frac{|E|}{2\pi}\right)^{1/2},
 \end{equation}
 where \(|E|\) is the area of the cross section \(E\).  Our choice of constants is motivated by condition \eqref{18} in Corollary \ref{C1}, which now reads $R\le a$.

We start with two simple geometric observations.

\begin{lemma}
    \label{L1b}
There is the identity
\[
\int_{\partial E}H\dd s +\delta = 0,
\]
and thus, the conditions \eqref{1} and \eqref{21} are equivalent.
\end{lemma}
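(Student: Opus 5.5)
We need to show $\int_{\partial E} H\,ds = 2\pi - \int_E r^{-2}\,d(r,z)$. The plan is to split $H=\kappa+\frac{n\cdot e_r}{r}$ as in \eqref{20} and treat the two pieces separately: the curvature term by the theorem of turning tangents, the second term by the divergence theorem.

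First, since $E$ is convex (hence simply connected) with $C^2$ boundary, the boundary curve $\partial E$ is a simple closed curve. With $\kappa$ positive on convex sets, the Hopf Umlaufsatz gives
\[
\int_{\partial E}\kappa\dd s = 2\pi.
\]

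Second, write $\frac{n\cdot e_r}{r} = n\cdot F$ with the planar vector field $F(r,z)=\left(\frac1r,0\right)^T$. Since $E$ is a compact subset of the open half plane $\R^2_+$, we have $r$ bounded away from zero on $\overline E$, so $F$ is smooth there. Its divergence is $\div F=\partial_r(1/r)=-1/r^2$. The divergence theorem in the $(r,z)$-plane, with $n$ the outer normal of $E$ (which points into the exterior domain, as stipulated after \eqref{20}), then yields
\[
\int_{\partial E}\frac{n\cdot e_r}{r}\dd s = -\int_E\frac1{r^2}\dd(r,z).
\]

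Adding the two identities gives
\[
\int_{\partial E}H\dd s = 2\pi-\int_E \frac{1}{r^2}\dd (r,z) = -\delta,
\]
by the definition \eqref{21}. This is the claimed identity. It follows immediately that \eqref{1}, which says $\int_{\partial E}H\dd s\le0$, holds if and only if $\delta\ge0$, which is \eqref{21}.

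There is no real obstacle here. The only points needing care are the sign conventions, namely the orientation of $n$ and the sign of $\kappa$, and the fact that $\overline E$ stays away from the axis $r=0$. Both are guaranteed by the setting: $E$ lies in the open half plane, and the conventions for $n$ and $\kappa$ are fixed after \eqref{20}.
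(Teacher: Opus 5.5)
Your proposal is correct and follows the same route as the paper. You split $H$ via \eqref{20}, evaluate $\int_{\partial E}\kappa\dd s=2\pi$ by Gauss--Bonnet (theorem of turning tangents), and obtain $\int_{\partial E}\frac{n\cdot e_r}{r}\dd s=-\int_E r^{-2}\dd(r,z)$ by the divergence theorem.
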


\begin{proof}
It suffices to decompose the mean curvature into the meridional curvature and the azimuthal curvature \eqref{20} and apply the Gauss--Bonnet theorem and the divergence theorem to the individual terms:
\begin{equation}\label{41}
\int_{\partial E} H\dd s = \int_{\partial E}\kappa \dd s +\int_{\partial E} \frac{n\cdot e_r}{r}\dd s = 2\pi -\int_E \frac1{r^2}\dd(r,z).
\end{equation}
The definition of $\delta$ in \eqref{21} yields the statement.
\end{proof}

 \begin{lemma}\label{L1}
   Suppose that \eqref{18} holds. Then \eqref{1} is true.
 \end{lemma}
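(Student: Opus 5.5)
By Lemma \ref{L1b}, condition \eqref{1} is equivalent to \eqref{21}, that is, to $\delta\ge 0$. So it suffices to show
\[
\int_E \frac{1}{r^2}\dd (r,z)\ge 2\pi
\]
whenever $2\pi R^2\le |E|$. The plan is to compare the average of $1/r^2$ over $E$ with the square of the inverse of the average of $r$, which by \eqref{9a} is exactly $1/R^2$.

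The key step is Jensen's inequality. The function $f(r)=r^{-2}$ is convex on $(0,\infty)$, and $E\subset\R^2_+$, so $r>0$ on $E$. Consider the normalized measure $\frac{1}{|E|}\dd(r,z)$ on $E$, which is a probability measure. Jensen's inequality for it gives
\[
\frac{1}{|E|}\int_E \frac{1}{r^2}\dd (r,z) \;\ge\; \left(\frac{1}{|E|}\int_E r\dd (r,z)\right)^{-2} = \frac{1}{R^2}.
\]
If $E$ touches the axis, the left-hand side is $+\infty$ and there is nothing to prove.

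Multiplying by $|E|$ and using the hypothesis \eqref{18} gives
\[
\int_E \frac{1}{r^2}\dd(r,z)\ge \frac{|E|}{R^2}\ge 2\pi.
\]
Hence $\delta\ge0$, and by Lemma \ref{L1b} the curvature condition \eqref{1} holds. Equivalently, in the notation \eqref{23} the hypothesis reads $R\le a$, and the estimate becomes $\int_E r^{-2}\ge 2\pi a^2/R^2\ge 2\pi$.

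The argument uses neither convexity nor symmetry of $E$. There is no genuine obstacle; the only point requiring care is that $R$ is the mean of $r$ over $E$ (not the distance to some geometric centre), which is precisely what makes Jensen's inequality applicable.
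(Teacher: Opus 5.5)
Your proof is correct and follows the paper's argument: reduce to $\delta\ge0$ via Lemma \ref{L1b}, show $\int_E r^{-2}\dd(r,z)\ge |E|/R^2$, and conclude with \eqref{18}. The only cosmetic difference is how that bound is obtained: the paper uses the Cauchy--Schwarz inequality twice ($|E|\le R\int_E r^{-1}\dd(r,z)$ and $\int_E r^{-1}\dd(r,z)\le \big(\int_E r^{-2}\dd(r,z)\big)^{1/2}|E|^{1/2}$), whereas you use a single application of Jensen's inequality for $r\mapsto r^{-2}$.
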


 Obviously, Corollary \ref{C1} is a consequene of Theorem \ref{T1} and Lemma \ref{L1}.

\begin{proof} According to Lemma \ref{L1b}, it is enough to establish
  \eqref{21}. We use twice the  Cauchy--Schwarz inequality and obtain
    \[
  |E| \le R \int_{E}\frac{1}{r}\dd (r,z)  \le R \left(\int_{E}\frac{1}{r^2}\dd (r,z)\right)^{1/2}|E|^{1/2} .
    \]
    It immediately follows that
    \[
    \frac{|E|}{R^2}\le  \int_E\frac1{r^2}\dd (r,z),
    \]
    and thus, the bound in \eqref{18} yields \eqref{21}.
\end{proof}

At the heart of the proof of Theorem \ref{T1} are two crucial observations. The first connects the curvature sign condition \eqref{1} with the non-negativity of the Bernoulli constant.

\begin{lemma}
    \label{L2}
Let $\beta>0$ be given. Suppose that there exists a solution $(E,\psi,W, \gamma,\lambda)$ to \eqref{4}, \eqref{3}, \eqref{5}, \eqref{6}, \eqref{7}, and \eqref{10} satisfying the curvature condition \eqref{1} and the maximum principle \eqref{47}. Then the Bernoulli constant is non-negative,
more precisely,
\[
\lambda|\partial E|\ge \delta.
\]
\end{lemma}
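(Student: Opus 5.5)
The plan is to integrate the dynamic boundary condition \eqref{5} over the boundary curve $\partial E$ with respect to arc length, and to use Lemma \ref{L1b} to identify the curvature contribution. Integrating \eqref{5} gives
\[
2\sigma\int_{\partial E} H\dd s-\rho\int_{\partial E}\left(\frac1r\partial_n\psi-Wn\cdot e_r\right)^2\dd s+\lambda|\partial E|=0 .
\]
By Lemma \ref{L1b}, the first term equals $-2\sigma\delta$. Solving for the Bernoulli constant then gives
\[
\lambda|\partial E| = 2\sigma\delta+\rho\int_{\partial E}\left(\frac1r\partial_n\psi-Wn\cdot e_r\right)^2\dd s .
\]

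Since the kinetic term is a square with $\rho>0$, dropping it yields $\lambda|\partial E|\ge 2\sigma\delta$. The curvature condition \eqref{1} gives $\delta\ge0$ through Lemma \ref{L1b}, so $\lambda\ge0$. To obtain the inequality exactly as stated, with no physical constants, I will work in the non-dimensional variables announced in Section \ref{S3}. I expect these to be scaled so that the capillary term appears as $H$ alone, that is, with $2\sigma=1$ and with $\rho$ absorbed into the Weber number. In those units the identity above reads $\lambda|\partial E|=\delta+(\text{non-negative})$, which is the claim.

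I expect no serious obstacle here. The only point needing care is the bookkeeping of the non-dimensionalisation, so that the factor in front of $\delta$ comes out as exactly one. I do not expect the maximum principle \eqref{47} or the sign $\beta>0$ to be needed for this inequality itself. They fix the orientation of the flow, and will matter later when the dropped kinetic term is bounded from below in terms of $\beta^2$ and hence the Weber number, via Cauchy--Schwarz,
\[
\int_{\partial E}\left(\frac1r\partial_n\psi - Wn\cdot e_r\right)^2\dd s \ \ge\ \frac{1}{|\partial E|}\left(\int_{\partial E}\left|\frac1r\partial_n\psi - Wn\cdot e_r\right|\dd s\right)^2 .
\]
Since $\int_{\partial E}n\cdot e_r\dd s=0$ by the divergence theorem and \eqref{10} holds, this bound in turn is at least $\beta^2/|\partial E|$. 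That sharper estimate is not required for the present lemma.
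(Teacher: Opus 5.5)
Your proposal is correct and follows the paper's proof: integrate the jump condition over $\partial E$, drop the non-negative squared kinetic term, and use Lemma \ref{L1b} to replace $\int_{\partial E}H\dd s$ by $-\delta$. As in the paper's argument, \eqref{47} plays no role. One bookkeeping remark: the paper's scaling \eqref{32} keeps the capillary term as $2H$, which amounts to $\sigma=1$ rather than your guessed $2\sigma=1$. So you actually obtain $\lambda|\partial E|\ge 2\delta$, which still implies the stated bound because $\delta\ge 0$.
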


Next, we give a condition that yields a bound on the Weber number.

\begin{proposition}\label{P1}
Then there exists a positive constant \(C\) such that the following is true: Let $\beta>0$ be given and suppose that there exists a solution $(E,\psi,W, \gamma,\lambda)$ to \eqref{4}, \eqref{3}, \eqref{5}, \eqref{6}, \eqref{7} that satisfies \eqref{10}.    Assume furthermore that \(\lambda\geq 0\).  Then
\[
\We\geq \frac{C}{\mu+\mu^3}\left(\frac1{\mu^2}+\delta \right),
\]
where $\mu=R/a$.
\end{proposition}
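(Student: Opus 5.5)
The plan is to integrate the Young--Laplace condition \eqref{5} against a weight adapted to the sign of the flow. It rests on two facts, each used once. First, by \eqref{47} the relative tangential speed
\[
v:=\frac1r\partial_n\Psi=\frac1r\partial_n\psi-W\,n\cdot e_r
\]
does not change sign on $\partial E$. Second, $\int_{\partial E}n\cdot e_r\dd s=\int_E \partial_r 1\dd(r,z)=0$, so from \eqref{10} we get
\[
\int_{\partial E}|v|\dd s=-\int_{\partial E}\frac1r\partial_n\psi\dd s=\beta .
\]
Thus the circulation is exactly the $L^1(\partial E)$-norm of the relative speed. Since $\lambda\ge0$, \eqref{5} gives pointwise $\rho v^2=2\sigma H+\lambda\ge 2\sigma H_+$, where $H_+=\max\{H,0\}$. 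Hence $|v|\ge\sqrt{2\sigma H_+/\rho}$, and integrating yields
\begin{equation}
\beta\ \ge\ \sqrt{\tfrac{2\sigma}{\rho}}\int_{\partial E}\sqrt{H_+}\dd s,
\qquad\text{so}\qquad
\We\ \ge\ \frac{2\sqrt{2\pi}}{\sqrt{|E|}}\Bigl(\int_{\partial E}\sqrt{H_+}\dd s\Bigr)^2 .
\end{equation}
This reduces the proposition to a purely geometric inequality for convex symmetric $E$:
\[
\Bigl(\int_{\partial E}\sqrt{H_+}\dd s\Bigr)^2\ \gtrsim\ \frac{a}{\mu+\mu^3}\Bigl(\frac1{\mu^2}+\delta\Bigr).
\]
The right-hand side is scale invariant, as it must be.

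For the geometric inequality I would split $\partial E$ into an outer arc $\Gamma_+=\{n\cdot e_r\ge0\}$ and an inner arc $\Gamma_-$. Convexity and symmetry make these two graphs over the $z$-interval $[-h,h]$, where $h$ is the half-height. On $\Gamma_+$ both $\kappa\ge0$ and $n\cdot e_r/r\ge0$, so $H_+=H\ge n\cdot e_r/r$. Using $\sqrt{x}\ge x/\sqrt{\sup x}$ together with $n\cdot e_r\dd s=\dd z$ on $\Gamma_+$, I get
\[
\int_{\Gamma_+}\sqrt{H_+}\dd s\ \ge\ \int_{\Gamma_+}\frac{n\cdot e_r}{r}\dd s\Big/\sup_{\Gamma_+}\sqrt{\tfrac{n\cdot e_r}{r}}\ \gtrsim\ \frac{h}{\sqrt{r_{\max}}}\cdot\sqrt{r_{\min}}\ \text{(or similar)} .
\]
A cleaner variant is Cauchy--Schwarz in the form $\int_{\Gamma_+}\sqrt{n\cdot e_r/r}\dd s\ge\int_{-h}^{h}r^{-1/2}\dd z$, using $\sqrt{n\cdot e_r}\ge n\cdot e_r$. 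This gives $\gtrsim h/\sqrt{r_{\max}}$ directly.

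Convexity then supplies the needed geometric comparisons: $|E|\le 2h(r_{\max}-r_{\min})$, $r_{\max}\lesssim R+\sqrt{|E|}$ (more precisely $r_{\max}-R\lesssim |E|/h$), and $h\gtrsim |E|/r_{\max}$. With these, $h^2/r_{\max}\gtrsim |E|^2/r_{\max}^3\gtrsim a/(\mu+\mu^3)\cdot\mu^{-2}$ after writing everything in terms of $a$ and $\mu=R/a$. This produces the $1/\mu^2$ term.

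For the $\delta$ term I would exploit that $\delta$ is large only when $E$ comes close to the axis. By definition \eqref{21}, $\int_E r^{-2}\dd(r,z)=2\pi+\delta$, so the region of $E$ near $r=0$ carries a large weight of $1/r^2$. I would compare $\int_E r^{-2}\dd(r,z)$ with a one-dimensional integral along the outer arc, for instance via $\int_E r^{-2}=\int_{-h}^{h}\bigl(r_-(z)^{-1}-r_+(z)^{-1}\bigr)\dd z$. Here $r_\pm(z)$ parametrise $\Gamma_\pm$. The aim is to bound this by $\int_{-h}^h r_+^{-1/2}\dd z$ times a power of the geometric scale. Where that fails because $E$ is thin near the inner side, I would use instead the positivity of $\kappa$ on the part of $\Gamma_-$ where $E$ bends around near the axis. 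There the Gauss--Bonnet budget $\int\kappa\dd s=2\pi$ concentrated on a small arc forces $\int\sqrt{\kappa}$ to be large.

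I expect this last step to be the main obstacle: converting the area quantity $\delta$ into a lower bound for $\int_{\partial E}\sqrt{H_+}\dd s$ with the precise powers of $\mu$ claimed. On $\Gamma_-$ the azimuthal term $n\cdot e_r/r$ is negative and large near the axis, and it competes with $\kappa$. I would therefore first try to restrict to the arcs near the top and bottom points $z=\pm h$, where $n\cdot e_r\approx0$ and hence $H\approx\kappa\ge0$, and to estimate $\int\sqrt{\kappa}$ there from the convexity constraints. Only if that is too lossy would I return to a weighted version of the integration above, with weight $r^{\alpha}$, using that $v$ has a fixed sign.
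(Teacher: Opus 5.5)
There is a genuine gap, and it sits exactly at the step you flagged. On your route the $\delta$ term cannot be obtained at all.

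\textbf{Why your reduction fails.} Replacing $\rho v^2=2\sigma H+\lambda$ by $\rho v^2\ge 2\sigma H_+$ throws away the Bernoulli constant, and the geometric inequality you reduce to is false. Take for $E$ the disk of radius $\rho_0$ centred at $(\rho_0+\varepsilon,0)$. Then $a=\rho_0/\sqrt2$ and $\mu\to\sqrt2$. Example~\ref{E1} with $m=n=\rho_0$ gives $\delta\sim 2\pi\sqrt{\rho_0/(2\varepsilon)}\to\infty$ as $\varepsilon\to0$. With $\theta$ the polar angle about the centre,
\[
H=\frac1{\rho_0}+\frac{\cos\theta}{r}=\frac{R+2\rho_0\cos\theta}{\rho_0\,r},
\]
so $H>0$ only where $\cos\theta>-R/(2\rho_0)$. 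There $r>R/2$ and $H\le 6/\rho_0$. Hence $\bigl(\int_{\partial E}\sqrt{H_+}\dd s\bigr)^2\le 24\pi^2\rho_0$ uniformly in $\varepsilon$, whereas $\frac{a}{\mu+\mu^3}\bigl(\mu^{-2}+\delta\bigr)\to\infty$.

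The largeness of $\delta$ comes from the part of $\partial E$ facing the axis, where $H$ is very \emph{negative*}, and $H_+$ does not see it. This also rules out your proposed repairs. Here $\kappa\equiv 1/\rho_0$ is not concentrated anywhere, and the arcs near $z=\pm h$ carry bounded curvature. Inserting weights $r^\alpha$ does not help either once only $H_+$ is left.

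\textbf{The missing idea.} You need to keep $\lambda$ and show that it is large when $\delta$ is. Integrating \eqref{5} over $\partial E$ and using Lemma~\ref{L1b} ($\int_{\partial E}H\dd s=-\delta$, by Gauss--Bonnet and the divergence theorem) gives
\[
\lambda|\partial E|=2\sigma\delta+\rho\int_{\partial E}v^2\dd s\ \ge\ 2\sigma\delta .
\]
This is Lemma~\ref{L2}, whose proof does not need \eqref{1}. (Pointwise one even has $\lambda\ge -2\sigma\min_{\partial E}H$, which blows up like $2\sigma/\varepsilon$ in the example above.)

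On your outer arc $\Gamma_+$ one has $H\ge0$, hence $\rho v^2\ge\lambda$ there, and so $\beta\ge\int_{\Gamma_+}|v|\dd s\ge 2h\sqrt{\lambda/\rho}$. Since $\We=\rho\beta^2/(\sigma a)$, this gives $\We\ge 8h^2\delta/(a|\partial E|)$. With $\Delta R=\Rmax-\Rmin$, the elementary bounds $|\partial E|\le 4h+2\Delta R$, $2h\,\Delta R\ge|E|$ and $\Delta R\le 3R$ yield $h^2/|\partial E|\gtrsim a/(\mu+\mu^3)$. That is exactly the $\delta$ term of the proposition, and it is how the paper proceeds.

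\textbf{The $\mu^{-2}$ term.} Your handling here is fine in spirit. The observation $\sqrt{n\cdot e_r}\dd s\ge\dd z$ on $\Gamma_+$ is neater than the paper's level-set argument with $\mathcal S(b)$. It gives $\int_{\Gamma_+}\sqrt{H_+}\dd s\ge 2h/\sqrt{\Rmax}$ without loss, and hence even $\We\gtrsim\mu^{-3}$, which is better than the paper's $(\mu^3+\mu^5)^{-1}$ for large $\mu$.

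However, the comparisons you list do not close it. The bound $r_{\max}\lesssim R+\sqrt{|E|}$ together with $h\gtrsim|E|/r_{\max}$ only yields $\We\gtrsim(1+\mu)^{-3}$. That is far from $\mu^{-3}$ when $\mu\ll1$ (tall thin sections near the axis). You need $\Rmax\le 3R$, i.e.\ Lemma~\ref{L3}, which is a genuine convexity/centroid fact.
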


Actually, the proof of our first theorem is now obvious.

\begin{proof}[Proof of Theorem \ref{T1}.]
The statement  follows directly from Lemma \ref{L2}, Proposition \ref{P1}, and the definition in \eqref{23}.
\end{proof}

Before turning to the   proofs of Lemma \ref{L2} and Proposition \ref{P1}, it is convenient to non-dimensionalise the problem. Setting
\[(r,z)= (a\hat{r},a\hat{z}), \quad \psi= a\beta \hat{\psi},\]
     the boundary conditions \eqref{3} and \eqref{5} become
\begin{align}
     \hat{\psi}& =\frac{\hat{W}}{2}\hat{r}^2+\hat{\gamma}\quad \mbox{on }\partial \hat E,\label{31}\\
     2\hat{H} +\hat{\lambda} & = \We \left(\frac{1}{\hat{r}}\partial_n\hat{\psi}-\hat{W}n\cdot e_r\right)^2\quad \mbox{on }\partial \hat E.\label{32}
\end{align}
where we have set
\[\hat{W}=\frac{Wa}{\beta}, \,\hat{\gamma}=\frac{\gamma}{a\beta}, \,\hat{\lambda}=\frac{\lambda a}{\sigma}.\]
Moreover, the circulation \eqref{10} turns into
\[
\int_{\partial \hat E} \frac1{\hat r} \partial_n\hat \psi \dd s=-1.
\]
Thus without loss of generality, we many assume from here on that \(\beta=1\) and \(|E|=2\pi\) or, equivalently, $a=1$. For notational convenience, we drop the hats in the following.

\begin{proof}[Proof of Lemma \ref{L2}]
   We integrate the non-dimensionalised jump condition \eqref{32} over the boundary \(\partial E\) and obtain 
\begin{align}
     \int_{ \partial E} H\dd s+\lambda |\partial E|&=\We\int_{\partial E}\left(\frac{1}{r}\partial_n\psi-Wn\cdot e_r\right)^2 \dd s\geq 0.\label{15}
\end{align}
Invoking the curvature sign condition \eqref{1} and the identity in Lemma \ref{L1},  we conclude that  
\[
\lambda |\partial E|\ge \delta \ge0,
\]
as desired.
\end{proof}

We provide an auxiliary geometric result, which exploits the convexity of the bubble rings' cross-sections.

\begin{lemma}\label{L3}
    Let \(E\subset\mathbb{R}^2_+\) be a compact convex set with positive area.  Then
    \[
    \Rmax=\max\{r>0: (r,z)\in E\}\le  3R.
    \]
\end{lemma}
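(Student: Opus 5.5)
The plan is to reduce the statement to the classical fact that the centroid of a planar convex body lies at distance at least one third of the width from each supporting line. Write $\Rmin=\min\{r:(r,z)\in E\}\ge 0$ and $w=\Rmax-\Rmin$. If I can show
\[
R-\Rmin\ \ge\ \tfrac13 w ,
\]
the lemma follows at once. Indeed, this gives $R\ge \tfrac23\Rmin+\tfrac13\Rmax\ge \tfrac13\Rmax$, using $\Rmin\ge0$ because $E\subset\R^2_+$.

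To prove the centroid bound, I will pass to the one-dimensional slice function
\[
h(r)=\mathcal{H}^1\bigl(\{z:(r,z)\in E\}\bigr),\qquad r\in[\Rmin,\Rmax].
\]
By convexity of $E$, the slice at a convex combination of $r_1,r_2$ contains the corresponding convex combination of the slices. Hence $h$ is concave and nonnegative on $[\Rmin,\Rmax]$. Moreover $R=\int r h\dd r/\int h\dd r$ and $|E|=\int h\dd r>0$.

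I then compare $h$ with the linear profile $g(r)=\alpha(\Rmax-r)$, choosing $\alpha>0$ so that $\int g=\int h$. This $g$ is the slice function of a triangle with vertex at $\Rmax$, and its centroid is exactly $\Rmin+w/3$. The difference $f=h-g$ is concave, since $h$ is concave and $g$ is affine. It satisfies $f(\Rmax)=h(\Rmax)\ge0$ and $\int f=0$.

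The key step, which I expect to be the only delicate point, is the sign structure of $f$. The superlevel set $\{f\ge0\}$ of a concave function is an interval. It contains $\Rmax$ and is nonempty, so it has the form $[s,\Rmax]$ for some $s$, and $f<0$ on $[\Rmin,s)$. Consequently $(r-s)f(r)\ge0$ on the whole interval. Using $\int f=0$, this gives
\[
\int r f\dd r=\int (r-s)f\dd r\ge0 .
\]
Therefore $\int rh\ge\int rg$, and dividing by the common mass yields $R\ge \Rmin+w/3$, which concludes the argument.
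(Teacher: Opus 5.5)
Your proof is correct, but it takes a different route from the paper.

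The paper works directly with $k=\Rmax/R$. It takes the chord $B_1B_2$ of $E$ on the line $\{r=R\}$ and a point $A$ with $r=\Rmax$. By convexity, the triangle $AB_1B_2$ lies inside $E\cap\{r>R\}$, and $E\cap\{r\le R\}$ lies inside the trapezoid cut out by the lines $AB_1$, $AB_2$, $\{r=R\}$ and the $z$ axis. It then combines the moment balance $\int_{E\cap\{r>R\}}(r-R)=\int_{E\cap\{r\le R\}}(R-r)$, which is just the definition of $R$, with the explicitly computed moments of the triangle and the trapezoid. This gives $(k-1)^3\le 3k-1$, i.e.\ $k\le 3$. (In the paper's display the two outer inequalities are printed in the reverse direction. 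The triangle's moment is really the lower bound and the trapezoid's the upper bound, and that is what produces the stated cubic inequality.)

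You instead reduce to the Minkowski--Radon centroid estimate in the $e_r$ direction. You prove it analytically, using concavity of the slice length $h$ (Brunn's principle) and the single sign change of $h-g$ against a linear profile of equal mass.

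Your version avoids explicit integration and the cubic, and it uses one comparison profile instead of an inner and an outer polygon. It also gives the sharper statement $R\ge\frac23\Rmin+\frac13\Rmax$, i.e.\ $\Rmax-\Rmin\le 3(R-\Rmin)$. This contains both the lemma and the width bounds $w(z)\le 3R$ and $\Delta R\le 3R$ used later in the proof of Proposition~\ref{P1}.

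The paper's argument is more elementary and pictorial. It also exhibits the extremal configuration directly: a triangle with one side on the $z$ axis. In your proof this corresponds to $f\equiv 0$ together with $\Rmin=0$.
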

\begin{proof}
We set \(k=\Rmax /R.\)  It is clear that $k>1$ because $E$ has positive area. Our goal is to  show that \(k\le 3.\)

Consider the vertical line \(\{r=R\},\) which intersects twice with the set \(E.\) We denote these two intersection points by \(B_1\) and \(B_2\). Suppose the point \(A\) realises the maximal distance with respect to the axis \(\{r=\Rmax \}\).   By convexity, it is clear that the triangle $T_+$ generated by $A$, $ B_1$ and $B_2$ is contained in the set $E_+:=\{(r,z)\in E:\: r>R\}$, see  Figure \ref{fig2}.  \begin{figure}[t]
        \centering
        \includegraphics[scale=0.6]{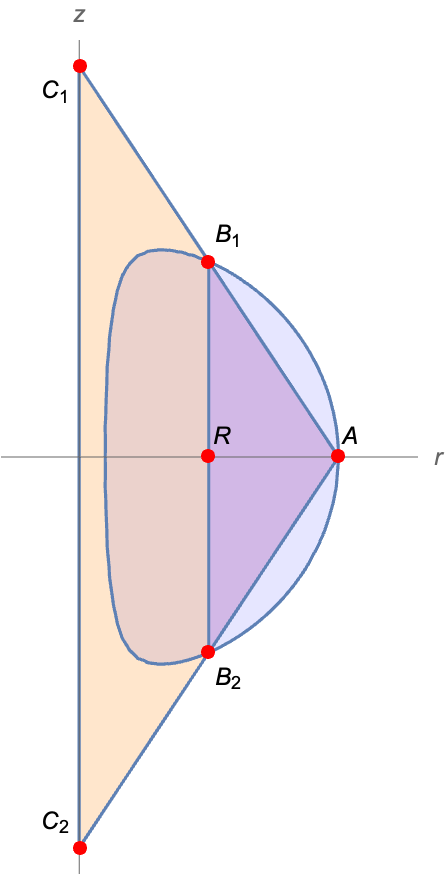}
        \caption{Construction of the triangle contained in $E_+$ and the trapezoid containing $E_-$ in the case of a symmetric convex set $E$.}
        \label{fig2}
    \end{figure}

Likewise, if we extend the segments \(\overline{AB_1}\) and \(\overline{AB_2}\) to straight lines and denote their respective intersection points with the $z$ axis by \(C_1\) and \(C_2\), then by convexity once again, we see that the set \(E_-=\{(r,z)\in E:\:  r\leq R\}\) is contained in the trapezoid $T_-$ generated by $B_1,B_2,C_2$, and $C_1$, see also Figure \ref{fig2}.

In order to estimate $k$, we start by observing that our definition of $R$ and the previous observations yield  
   \begin{equation}\label{a}
\begin{aligned}
\int_{T_-} (R-r) \dd (r,z) &\le \int_{E_-} (R-r) \dd (r,z)\\
&=\int_{E_+}(r-R) \dd (r,z) \le \int_{T_+}(r-R) \dd (r,z).
    \end{aligned}   \end{equation} 
The integrals on the both sides of the estimates can be computed explicitly.   For this, we define    \(L=\mathrm{dist}(B_1,B_2).\) Since the set $E$ has positive measure, we know that $L$ must be positive. For the integral on the right-hand side of \eqref{a}, we observe that
    \begin{align*}
        \int_{T_+}(r-R) \dd (r,z)
        &=\int_{R}^{kR}\left(r-R\right)\left[-\frac{Lr}{(k-1)R}+\frac{k}{k-1}L \right] \dd r\\
        &= LR^2\left(\frac{1}{6}k^2-\frac{1}{3}k+\frac{1}{6}\right)\\
        &=\frac{1}{6}LR^2(k-1)^2.
    \end{align*}
The integral on the left-hand side of \eqref{a} can be calculated similarly: 
    \begin{align*}
        \int_{T_-} (R-r) \dd (r,z) 
        &= \int_{0}^{R}(R-r)\left[\frac{k}{k-1}L-\frac{Lr}{(k-1)R}\right] \dd r\\
        &= LR^2\left[\frac{k}{2(k-1)}-\frac{1}{6(k-1)}\right].
    \end{align*}
    Inserting both identities into  \eqref{a} and using that $k>1$ gives
    \[
    (k-1)^3 \le 3k-1
    \]
    or, equivalently, $k\le 3$ as desired.
\end{proof}
\begin{remark}
It should be noticed that the constant 3 is sharp as the equality can be realised by a triangle symmetric in \(z\) whose one side is exactly on the \(z\) axis.
\end{remark}
Next, we turn to the proof of the key proposition.

\begin{proof}[Proof of Proposition \ref{P1}]
    For given \(z\in\R,\) let \(w(z)\) be the horizontal width of \(E,\) which is defined as
    \[w(z):=\Rmax (z)-\Rmin(z),\]
where
\[
\Rmin(z) = \min\{r:\:(r,z)\in\overline{E}\},\quad \Rmax(z) = \max\{r:\:(r,z)\in\overline{E}\}.\]
 We furthermore denote by
\[
h  =\sup\{z\in\R:\: w(z)>0\}
\]
the maximal height of the set $E$ over the $r$ axis. Since \(|E|=2\pi\) and because \(0\leq w(z)\le 3R\) by the virtue of Lemma \ref{L3}, we see that 
    \[2\pi=\int_{-h}^{h}w(z)\dd z\le 6R h,\]
which in turn implies   
    \begin{equation}
       2h\geq\frac{2\pi}{3R}\label{13} .
    \end{equation}

We now define
\[
\mathcal{S}(b)= \left\{(r,z)\in\partial E:\: n(r,z)\cdot e_r> b\right\} 
\]
for  $b\in(0,1)$.
Our goal is to show that there exists a positive \(b_0\) such that  \(\mathcal{S}(b)\) has positive length for all \(b\in(0,b_0).\) 
For $b=0$, the part $\mathcal{S}(0)$ of the boundary can be parametrised as a graph over  $z$,
    \[\gamma_{0}(z)=(z,f(z)),\, z\in (-h,h).\]
This is actually possible thanks to the convexity of the shape.
The arc-length can be represented by 
    \[\dd s=\sqrt{1+(f'(z))^2}\,\dd z\]
    and we have
    \begin{equation}
        \label{35}
        n\cdot e_r=\frac{1}{\sqrt{1+(f'(z))^2}}
    \end{equation}
    for the radial part of the outer normal.
Therefore, we can compute the length of this part of the boundary as
    \begin{align*}
       |\mathcal{S}(0)|=\int_{-h}^{h}\sqrt{1+(f')^2}\,\dd z 
        \leq \int_{-h}^{h}(1+|f'|)\, \dd z
        =2h+\int_{-h}^{h}|f'| \,\dd z.
    \end{align*}
    Thanks to the convexity and symmetry of $E$, we have that
    \[
    \int_{-h}^h |f'|\dd z = 2\int_{-h}^0f'(z)\dd z = 2\left(f(0)-f(-h)\right) = 2\Rmax.
    \]
    Invoking Lemma \ref{L3}, we then find
    \begin{equation}
        \label{36}
   |\mathcal{S}(0)|\le 2h + 6R.
    \end{equation}

We now use \eqref{35} to write and estimate
\[
2h = \int_{-h}^h \dd z = \int_{\mathcal{S}(0)} n\cdot e_r\dd s =  \int_{\mathcal{S}(b)} n\cdot e_r\dd s +  \int_{\mathcal{S}(0)\setminus \mathcal{S}(b)} n\cdot e_r\dd s
\le |\mathcal{S}(b)|  + b|\mathcal{S}(0)\setminus \mathcal{S}(b)|.
\]
Making use of  \eqref{36}, we get
\[
2h \le |\mathcal{S}(b)| +b\left(2h+6R - |\mathcal{S}(b)|\right) = b(2h+6R) +(1-b)|\mathcal{S}(b)|,
\]
and thus
\[
|\mathcal{S}(b)| \ge 2h-\frac{6b}{1-b}R.
\]
Plugging in the bound in \eqref{13} and using the assumption in the theorem gives 
\[
|\mathcal{S}(b)| \ge \frac{2\pi}{3R } - \frac{6bR }{1-b}\ge \frac{2\pi}{3R } - 12bR .
\]
If $R >\sqrt{\pi}/6$, we choose $b= \pi/(36R ^2)<1$, and  obtain 
\begin{equation}
    \label{37}
    |\mathcal{S}(b)| \ge \frac{\pi}{3R }. 
\end{equation}
Otherwise, we choose $b=1/2$ and get
\[
|\mathcal{S}(b)|\ge \frac{2\pi}{3R } - 6R  =\frac{\pi}{2R }\left(\frac{4}{3}-\frac{12R ^2}{\pi}\right)\ge \frac{\pi}{2R } \ge \frac{\pi}{3R }.
\]
Hence, we have the bound \eqref{37} in  either case.

To finally deduce a bound on the Weber number, we write the jump condition \eqref{32} as 
\[
\sqrt{2H+\lambda}=\sqrt{\We}\left(-\frac{1}{r}\partial_n\Psi\right),
\]
where we have used that   $\partial_n\Psi\le 0$ on $\partial E$ thanks to the maximum principle \eqref{47}.
Integrating over \(\partial E\) and using the fact that \(\beta=1,\) we obtain 
\[\sqrt{\We}  = \int_{\partial E}\sqrt{2H+\lambda}\dd s.\]
Since \(\lambda\geq 0\) by assumption and because the convexity of $E$ implies that the meridional curvature is non-negative, we estimate 
\begin{align*}
   \sqrt{\We}&\geq \int_{\mathcal{S}(b)}\sqrt{\frac{2n\cdot e_r}{r}}\dd s+\sqrt{\lambda}|\mathcal{S}(0)|\\&  \geq \sqrt{\frac{2b}{\Rmax}} |\mathcal{S}(b)|+2\sqrt{\lambda}h  \geq \sqrt{\frac{2b}{3R}}\frac{\pi}{3R }+2\sqrt{\lambda}h.
\end{align*}
In the last inequality we have used Lemma \ref{L3} and \eqref{37}. Choosing the precise values of $b$ as above, the latter implies
\begin{equation}
    \label{101}
    \We \ge C\left(\frac1{R^5+R^3}+\lambda h^2\right),
\end{equation}
for some constant $C>0$.

Now, invoking Lemma \ref{L2} and brutally estimating the perimeter of $E$ by that of the enclosing rectangle, we see that 
\[
    {\lambda}h^2 \ge  \frac{\delta h^2}{|\partial E|}\ge\frac{\delta h^2}{4h+2\Delta R},
    \]
    where we have set $\Delta R = \Rmax-\Rmin$. The expression on the right-hand side is a monotone function of $h$. By estimating the area of $E$ by the enclosing rectangle, we find $\pi \le h\Delta R$, and thus previous estimate becomes
    \[
    \lambda h^2 \ge \frac{\delta \pi^2}{\Delta R\left(4\pi +2(\Delta R)^2\right)}
    \]
    Thanks to Lemma \ref{L3}, we may  estimate $\Delta R\le 3R$. Plugging the resulting bound into \eqref{101}, we deduce that 
    \[
    \We \ge \frac{C}{R+R^3}\left(\frac1{R^2}+\delta\right),
    \]
    for some new constant $C$.
\end{proof}

\section*{Acknowledgment}
The authors thank Lukas Niebel for stimulating discussions.

\bibliography{references.bib}
\bibliographystyle{abbrv}
\end{document}